\title{Alternative links, homogeneous links, and graphs}
\author{Jeremy Siegert}
\newtheorem{theorem}{Theorem}
\newtheorem{lemma}[theorem]{Lemma}
\newtheorem{corollary}[theorem]{Corollary}
\newtheorem{definition}[theorem]{Definition}
\newtheorem{question}[theorem]{Question}
\begin{document}
\begin{abstract}
In this paper we review the definitions of homogeneous and alternative links. We also give two new characterizations of an alternative link diagram, one within the context of the enhanced checkerboard graph and another from the labeled Seifert graph.
\end{abstract}
\maketitle

\section{Introduction}
In \cite{LK} Kauffman introduced the class of alternative links and conjectured that the class of alternative links is identical to the class of pseudoalternating links introduced by Murasugi and Mayland in \cite{EM}. It was known that all alternative links were pseudoalternating, and it would be shown in \cite{MS} that all pseudoalternating links of genus one were alternative, but not necessarily in cases of higher genera, thus resolving Kauffman's conjecture in the negative. This was done by looking at the intermediary class of homogeneous links which were introduced in \cite{PC}. The remaining question is then:
\begin{question}
Are the classes of alternative and homogeneous links identical?
\end{question}
Studying these two classes is interesting because both classes of links have the interesting property of having a minimal genus Seifert surface that can be realized by Seifert's algorithm (\cite{LK},\cite{PC}). The difficulty in answering our question lies in the fact that the alternativity of a link is difficult to determine. In this paper we will review homogeneous and alternative links and then give two new characterizations of an alternative diagram. First, in sections 4 and 5 we explain how to construct the enhanced checkerboard graph and the labeled Seifert graph, respectively. In section 6 the definition of a homogeneous link is discussed. The definition of alternative links and our new characterizations of an alternative diagram are discussed in section 7. We conclude with some open questions in section 8. A basic knowledge of knot theory and graph theory is assumed throughout however some preliminary concepts are covered in sections 2 and 3. To avoid confusion, we will refer to a regular projection of an oriented link into the plane as a diagram.

\section{Preliminaries}
 The Seifert Algorithm is a procedure by which one can obtain a surface bounded by a link $L$ from one of its diagrams, \cite{LK}. Seifert's algorithm proceeds as follows. Given a diagram $D$ we smooth all of the crossings according to the following scheme.
\begin{figure}[H]
\centering

\includegraphics{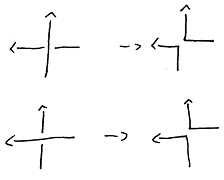}
\end{figure}
Following this procedure produces a collection of oriented circles in the plane bounded by discs. These circles are called Seifert circles. A diagram of $10_{138}$ before and after smoothing the crossings is displayed below.
\begin{figure}[H]
	\centering
	\caption{}
	\includegraphics{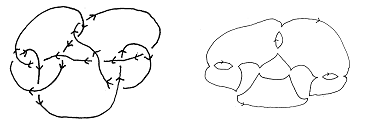}
\end{figure}

 To create a Seifert surface from these circles attach a twisted band to the discs bounding the circles in place of the crossings. The band is twisted in accordance with the smoothed crossing between the two circles. If one circle should be properly contained in another lift the innermost circle from the plane so that it lies above the plane containing the circle that properly contained it. If instead of creating a surface we place the appropriate positive or negative site markers as in \cite{LK} we acquire a diagram of Seifert circles with site markers between them instead of crossings. These site markers are shown below.
\begin{figure}[H]
\centering
\caption{Top: positive marker Bottom: negative marker}
\includegraphics{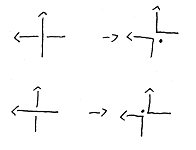}
\end{figure}

The components of the complement of the union of Seifert circles are referred to in \cite{LK} as \emph{spaces}. We will refer to the union of circles created by replacing the crossings of a diagram $D$ with site markers as the Seifert diagram, denoted $D_{s}$. Similarly we will denote the collection of spaces in the complement of the union of Seifert circles as $\hat{D}_{s}$. Pictured below is the Seifert diagram of the diagram previously shown.
\begin{figure}[H]

\centering
	\caption{$D_{s}$}
	\includegraphics{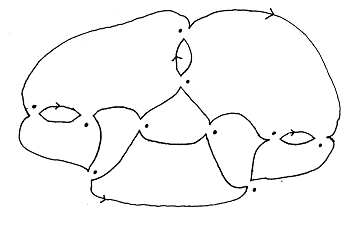}
	
\end{figure}
In this paper we will often refer to the \emph{height} of a Seifert circle as defined in \cite{PM}:
\begin{definition}
Given a Seifert circle $C$ of a Seifert diagram $D_{s}$ the height of $C$, denoted $h(C)$ is defined as the number of Seifert circles in $D_{s}$ that properly contain $C$.
\end{definition}
We will also, on occasion refer to the height of a space bounded by a Seifert circle. In this case we are referring to the bounded space in the plane properly contained between Seifert circles or the large unbounded space surrounding the diagram, more formally:
\begin{definition}
Given a Seifert diagram $D_{s}$ a space $A\subseteq \hat{D}_{s}$ is said to be of height $k$ if and only if it is bounded by a single Seifert circle of height $k$ or by a Seifert circle of height $k$ and one or more Seifert circles of height $k+1$. A space of the Seifert diagram is said to be of height $-1$ if and only if it is unbounded.
\end{definition}

A diagram divides the plane into several regions, including the large outside region surrounding the diagram. We will say that two regions $R_{1}$ and $R_{2}$ of a diagram are touching if they meet at a crossing and we will say that the crossing is incident to both $R_{1}$ and $R_{2}$. For example, the regions $A$, $B$, $C$ and $D$ in the figure below are touching. 
\begin{figure}[H]
  \centering
  
    \includegraphics{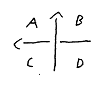}
\end{figure}
Observe that in the above diagram two arcs begin bordering $A$ and stop bordering $D$ at the crossing according to the orientation. Similarly one arc begins bordering $B$ and $C$ and stops bordering $B$ and $C$. We will say that if an arc begins bordering a region $R$ at a crossing $c$ then that arc is \emph{going into} $R$. Similarly, if an arc ceases to border a region $R$ at a crossing $c$ then the arc is said to be \emph{going out} of $R$. We then define the following:
\begin{definition}
Let $c$ be a crossing incident to a region $R$ then $c$ is:\\
\mbox{ }\\
a. Crossing into $R$ if and only if two arcs begin bordering $R$ according to the orientation at $c$.\\
\mbox{ }\\
b. Crossing out of $R$ if and only if two arcs stop bordering $R$ according to the orientation at $c$.\\
\mbox{ }\\
c. Sideswiping $R$ if and only if one arc begins bordering $R$ and one arc stops bordering $R$ according to the orientation at $c$.
\end{definition}
In the previous diagram the crossing show is crossing into $A$, out of $D$ and sideswiping $B$ and $C$.
 \begin{lemma}
 The number of arcs going into a region $R$ is equal to the number of arcs leaving $R$.
 \end{lemma}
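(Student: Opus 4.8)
The plan is to prove the equality by an orientation-driven bijection, so that the two numbers agree exactly rather than merely after cancellation. The key observation is that whether an arc \emph{begins} or \emph{stops} bordering $R$ is dictated entirely by its orientation: an oriented arc $e$ lying on the boundary of $R$ runs from a tail crossing to a head crossing and meets no crossing in between, so $R$ sits on one fixed side of $e$ all along $e$. Consequently $e$ begins bordering $R$ exactly at its tail and stops bordering $R$ exactly at its head; that is, each boundary arc goes into $R$ at precisely one crossing and out of $R$ at precisely one crossing.

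With this in hand I would argue as follows. Let $\mathcal{E}$ be the set of arcs on the boundary of $R$, each counted once for every side along which it meets $R$. Sending $e \in \mathcal{E}$ to its tail crossing realizes every event ``an arc begins bordering $R$'' exactly once, and sending $e$ to its head crossing realizes every event ``an arc stops bordering $R$'' exactly once. Thus the arcs going into $R$ and the arcs leaving $R$ are both in bijection with $\mathcal{E}$, and the two counts coincide. This is consistent with the local trichotomy: at a sideswipe the head of one boundary arc is paired with the tail of the next, so the crossing registers one ``in'' and one ``out,'' whereas a crossing into (resp.\ out of) $R$ registers the two tails (resp.\ two heads) of a pair of arcs at once.

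It is worth recording the equivalent formulation that this yields. Writing $i$, $o$, and $s$ for the numbers of crossings incident to $R$ that are crossing into, crossing out of, and sideswiping $R$, one gets $2i+s$ arcs going in and $2o+s$ arcs going out, so the lemma is the same as the identity $i=o$; the bijection above proves this as a by-product. The only real difficulty I anticipate is bookkeeping rather than conceptual: one must fix the meaning of ``side'' so that an arc having $R$ on both of its sides (a bridge of the underlying $4$-valent graph, as at a nugatory crossing) is counted with multiplicity two, and so that the boundary walk of $R$, which need not be a simple closed curve, is handled coherently. A clean way to sidestep these cases is to traverse each boundary component of $R$ as a cyclic walk and note that the crossings into and out of $R$ are exactly the positions where the traversal switches between running with and running against the arc orientations; since any cyclic word has equally many switches of each kind, summing over the boundary components again gives $i=o$.
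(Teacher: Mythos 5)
Your proof is correct and follows essentially the same route as the paper's: both arguments exhibit a bijection between the ``in'' events and the ``out'' events on the boundary of $R$. Your version is somewhat more explicit and careful --- you pin down each boundary arc's tail and head as its unique entering and exiting crossings and flag the multiplicity issue for arcs having $R$ on both sides --- whereas the paper simply asserts that each arc going into $R$ connects to one and only one arc going out.
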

 
 \begin{proof}
 Every arc into a region $R$ must connect to a unique arc going out of $R$. If not then we could have a single arc into $R$ attaching to multiple arcs leaving $R$ which is impossible or we could have two arcs into $R$ connecting to a single arc going out of $R$ which is also impossible. It then must be that every arc going into a region $R$ must connect to one and only on arc going out of $R$ and similarly every arc going out of $R$ must have come from one and only one arc coming into $R$. We then have that the number of arcs going into $R$ is equal to the number going out of $R$.
 \end{proof}
Note that if $R_{1}$ and $R_{2}$ are two regions of a link diagram with a crossing $c$ that is going out of $R_{1}$ and into $R_{2}$ then, upon smoothing the crossings of the diagram according to the Seifert algorithm $R_{1}$ and $R_{2}$ will be joined in the same space of the Seifert diagram. Similarly if there is a crossing out of $R_{2}$ into a region distinct from $R_{1}$, say $R_{3}$, then upon smoothing the crossings $R_{1}$,$R_{2}$, and $R_{3}$ will also all be joined in the same space of the Seifert diagram. We then introduce the following relation:
\begin{definition}
Given a diagram $D$ with regions $R_{0},R_{1},\dots, R_{n}$ we will say that two regions $R$ and $R^{\prime}$ are spatially connected, denoted $R\sim R^{\prime}$, if and only if there is a sequence of regions $R_{1},\dots,R_{n}$ with $R=R_{1}$ and $R^{\prime}=R_{n}$ such that for every pair of regions $R_{i}$ and $R_{i+1}$ there is a crossing $c$ that is either crossing out of $R_{i}$ and into $R_{i+1}$ or otherwise crossing into $R_{i}$ and out of $R_{i+1}$.
\end{definition}

\section{Graph Theory}

A basic knowledge of Graph Theory is assumed throughout this paper, however some terms are used frequently enough to merit a restatement of their definitions as taken from \cite{JA}:

\begin{definition}
Given a graph $G$ with vertex set $V$ a vertex $v\in V$ is a cut vertex if $G\setminus v$ is has more connected components than $G$. 
\end{definition}

\begin{definition}
A separation of a connected graph is a decomposition of the graph into two nonempty connected subgraphs which have just one vertex in common. This common vertex is called a separating vertex of the graph. A graph is said to be separable if it contains a separating vertex.
\end{definition}
Pictured below is a graph and it's smallest nontrivial nonseparable subgraphs:
\begin{figure}[H]
\centering
\includegraphics{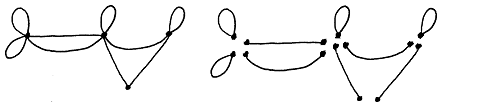}
\end{figure}

\begin{definition}
A block $B$ of a finite graph $G$ is a subgraph which is nonseparable and is maximal with respect to this property.
\end{definition}
An example of a graph and it's blocks is displayed below:
\begin{figure}[H]
	\centering
	\includegraphics{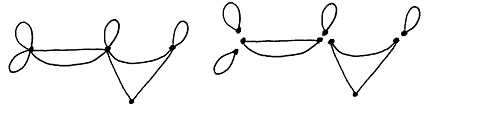}
\end{figure}
\iffalse
Recall that a planar graph is a graph that can be embedded into the plane such that no two edges intersect. A planar graph embedded in this way is called a plane graph. We can then define the \emph{dual} of a plane graph:
\begin{definition}
Given a plane graph $G$ we define the planar dual of $G$ denoted $G^{*}$ as the graph obtained in the following way. For every face $f$ of $G$ there is a corresponding vertex $f^{*}$ of $G^{*}$, and corresponding to each edge $e$ of $G$ there is an edge $e^{*}$ of $G^{*}$. Two vertices $f^{*},g^{*}\in G^{*}$ are joined by the edge $e^{*}$ if and only if $f$ and $g$ share $e$ as a common bordering edge in $G$. 
\end{definition}
An example of plane graph with labeled faces and it's dual with correspondingly labeled vertices are shown below:

\begin{figure}[H]
	\centering
	\includegraphics{}
\end{figure}
\fi
Given a digraph $G$ we denote the in degree of a vertex $v$ as $\delta^{-}(v)$, the outdegree of $v$ as $\delta^{+}(v)$ and the degree of $v$ as $d(v)=\delta^{-}(v)+\delta^{+}(v)$.
We will also the use the following theorem from section 2.4 of \cite{JA}:
\begin{theorem}
A graph $G$ has a cycle decomposition if and only if it is an even graph.
\end{theorem}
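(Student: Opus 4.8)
The plan is to prove the two implications separately: the forward direction is a direct degree count, while the reverse direction proceeds by induction on the number of edges. Throughout I take an \emph{even graph} to mean a graph in which every vertex has even degree, and a \emph{cycle decomposition} to mean a partition of the edge set into edge-disjoint cycles.

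For the forward implication, suppose $G$ decomposes into edge-disjoint cycles $C_{1},\dots,C_{k}$. Each cycle meets any given vertex $v$ in either $0$ or exactly $2$ edges, so each cycle through $v$ contributes $2$ to $d(v)$ and each cycle avoiding $v$ contributes $0$. Summing these contributions over all the cycles shows that every vertex has even degree, so $G$ is even. I expect this direction to present no difficulty.

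For the reverse implication I would induct on $|E(G)|$, with the base case of no edges handled by the empty decomposition. For the inductive step, assume $G$ is even with at least one edge. The key step is to extract a single cycle $C$ from $G$: restricting attention to the vertices of positive degree, every such vertex has degree at least $2$ (being even and nonzero), and a graph of minimum degree at least $2$ contains a cycle. Deleting $E(C)$ from $G$ lowers the degree of each vertex of $C$ by exactly $2$ and leaves all other degrees unchanged, so $G\setminus E(C)$ is again even and has strictly fewer edges. By the inductive hypothesis $G\setminus E(C)$ admits a cycle decomposition, and adjoining $C$ yields one for $G$.

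I expect the only genuine obstacle to be the lemma guaranteeing a cycle in a graph of minimum degree at least $2$; the standard argument takes a longest path, observes that its endpoint cannot be extended so that all neighbors of the endpoint lie on the path, and then uses a neighbor other than the path's second vertex to close a cycle. A subtlety worth flagging is that \emph{even} permits isolated vertices and degree-zero components, so this minimum-degree argument must be applied to the subgraph spanned by the edges (equivalently, to the positive-degree vertices) rather than to $G$ itself. Once this cycle-existence step is in hand, the induction closes immediately.
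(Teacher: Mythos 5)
Your proof is correct, and it is essentially the standard argument. Note that the paper does not prove this statement at all: it is quoted verbatim as Veblen's theorem from Section 2.4 of the cited Bondy--Murty text, and your induction on $|E(G)|$ (extract a cycle from the positive-degree subgraph, delete it, observe parity is preserved) together with the degree-count for the forward direction is precisely the textbook proof being invoked. The one point worth tightening is that the graph to which the paper applies this result, the enhanced checkerboard graph $\Phi(D)$, may have loops and parallel edges; your longest-path lemma as phrased (``use a neighbor other than the path's second vertex'') is a simple-graph argument, so you should either note that a loop is itself a cycle and that a pair of parallel edges closes a cycle of length two, or state the cycle-existence lemma for multigraphs from the outset. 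With that adjustment the induction closes exactly as you describe.
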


\section{The Enhanced Checkerboard Graph}
Let $D$ be a diagram. Checkerboard color the diagram as in \cite{LK}. We can construct a signed planar digraph $\Phi(D)$ called the enhanced checkerboard graph from this diagram in the following way. Assign a black vertex to each black region and a white vertex to each white region including the large ``outside" region of the diagram. Two vertices are connected by a directed edge labeled either $+$ or $-$ according to the scheme shown below. A clarification of how loops are handled is also shown.

\begin{figure}[H]\label{crossingscheme1}
  \centering
  
    \includegraphics{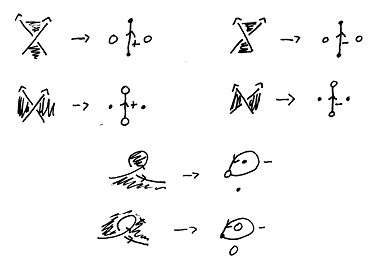}
\end{figure}
To more clearly illustrate how $\Phi(D)$ is constructed a colored diagram and the corresponding $\Phi(D)$ of $10_{138}$ are shown below. Note that, by construction $\Phi(D)$ is necessarily planar. This is due to the fact that our diagrams come from regular projections which only admit single and double points.
\begin{figure}[H]\label{loops}

     \includegraphics{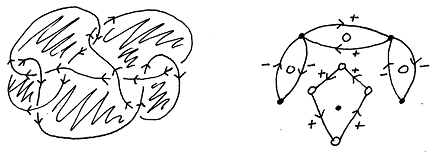}
 \end{figure}

 A similar graph construction is explained in \cite{PM} from which we borrow the $\Phi$ notation.
 
 \begin{theorem}
 For every vertex $v\in \Phi(D)$, $\delta^{+}(v)=\delta^{-}(v)$.
 \end{theorem}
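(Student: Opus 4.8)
The plan is to reduce the statement to a single vertex and then read the required equality straight off the construction, using the earlier lemma that the number of arcs entering a region equals the number leaving it. Fix a vertex $v \in \Phi(D)$ and let $R$ be the region of $D$ to which it corresponds. By construction every directed edge incident to $v$ is produced by a crossing of $D$ incident to $R$, and the adjacency scheme orients that edge according to the local orientation of the strands there. So the entire problem reduces to matching the orientation of each incident edge against the behavior of the arcs of $R$ at the crossing that produced it.

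First I would classify each crossing incident to $R$ as crossing into $R$, out of $R$, or sideswiping $R$, using the earlier trichotomy. The key fact to extract from the adjacency figure is that an edge is directed into $v$ exactly when the crossing producing it crosses into $R$, and directed out of $v$ exactly when that crossing crosses out of $R$, while a crossing that merely sideswipes $R$ produces no net contribution to the imbalance at $v$ (either it contributes no edge incident to $v$ at all, or it contributes equally to in- and out-edges, depending on the precise convention in the figure). Granting this reading, $\delta^{-}(v)$ equals the number $a$ of crossings that cross into $R$ and $\delta^{+}(v)$ equals the number $b$ of crossings that cross out of $R$.

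It then remains to show $a = b$, and here the arc-counting lemma finishes the job. A crossing into $R$ contributes two arcs entering $R$ and none leaving; a crossing out of $R$ contributes two leaving and none entering; a sideswiping crossing contributes exactly one of each. Writing $s$ for the number of crossings sideswiping $R$, the total number of arcs entering $R$ is $2a + s$ and the total leaving is $2b + s$. By the earlier lemma these totals coincide, so $2a + s = 2b + s$, whence $a = b$, which is exactly $\delta^{-}(v) = \delta^{+}(v)$.

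I expect the main obstacle to be the middle step: faithfully reading the orientation and sign conventions off the adjacency figure and confirming, for \emph{every} local configuration it depicts, that a crossing into $R$ yields an in-edge at $v$ and a crossing out of $R$ yields an out-edge, while sideswiping crossings and the degenerate loop configurations (where the two regions joined by the edge coincide) do not disturb the count. Once that correspondence is pinned down, the arc-counting lemma does the rest with no further computation.
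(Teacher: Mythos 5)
Your proposal is correct and follows essentially the same route as the paper: identify in-edges at $v$ with crossings into the corresponding region and out-edges with crossings out of it, note that sideswiping crossings contribute one arc in and one arc out, and then invoke the arc-counting lemma to force the two counts to agree. The paper phrases this as a proof by contradiction rather than the direct count $2a+s=2b+s$, but the substance is identical.
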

 \begin{proof}
 Let $v$ be an arbitrary vertex of $\Phi(D)$. Assume for the sake of contradiction that $\delta^{-}(v)\neq\delta^{+}(v)$ We then have that the number of crossings into the region corresponding to $v$, say $R_{v}$, is not equal to the number of crossings out of $R_{v}$. This would imply that the number of arcs into $R_{v}$ is not equal to the number of arcs leaving $R_{v}$ because the only other crossings with arcs going into $R_{v}$ are sideswiping which contribute one arc into $R_{v}$ and one arc out. We know that the number of arcs going into $R_{v}$ must equal those leaving so we have a contradiction from which we can conclude that $\delta^{-}(v)=\delta^{+}(v)$.
 \end{proof}
  In other words, $\Phi(D)$ is an even graph and has a cycle decomposition that is to say every edge is a part of a cycle.
\begin{theorem}
Two regions $R_{1}$ and $R_{n}$ of a diagram $D$ are part of the same space in the Seifert diagram $\hat{D}_{s}$ if and only if their is a (not necessarily directed) path between their corresponding vertices in $\Phi(D)$.
\end{theorem}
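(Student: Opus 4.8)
The plan is to factor the stated equivalence through the spatial connection relation $\sim$, proving that for regions $R_{1}$ and $R_{n}$ of $D$,
\[
R_{1},R_{n}\ \text{lie in the same space of }\hat{D}_{s}
\iff R_{1}\sim R_{n}
\iff \text{$R_{1},R_{n}$ are joined by a path in $\Phi(D)$}.
\]
Before attacking either equivalence I would record the precise correspondence between crossings and edges. By the construction of the enhanced checkerboard graph, each crossing $c$ of $D$ contributes exactly one directed edge, and, as in the proof of the preceding theorem (where $\delta^{-}(v)$ counts the crossings into $R_{v}$ and $\delta^{+}(v)$ the crossings out of $R_{v}$), that edge joins the unique region into which $c$ crosses with the unique region out of which $c$ crosses, oriented from the latter to the former. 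These are the two diagonally opposite, hence equally colored, regions at $c$; the two regions that $c$ sideswipes receive no edge from $c$.

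With this correspondence in hand, the equivalence $R_{1}\sim R_{n}\iff(\text{path in }\Phi(D))$ is essentially definitional. An undirected edge of $\Phi(D)$ between two vertices is exactly a crossing that is out of one of the corresponding regions and into the other, so a walk $R_{1}=S_{1},S_{2},\dots,S_{m}=R_{n}$ in $\Phi(D)$ is precisely a sequence witnessing $R_{1}\sim R_{n}$ in the sense of the spatial connection definition, and conversely; since a walk between two vertices exists iff a path does, this half only needs to be spelled out.

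The substance of the theorem is therefore the equivalence ``same space $\iff\ \sim$''. The implication $R_{1}\sim R_{n}\Rightarrow$ same space is supplied by the observation recorded after the definition of touching regions: a crossing out of $R_{i}$ and into $R_{i+1}$ fuses these two regions into a single space under Seifert smoothing, and iterating along the chain keeps $R_{1},\dots,R_{n}$ in one space. For the converse I would argue locally. Smoothing a crossing $c$ replaces it by two disjoint oriented arcs, and I would verify from the smoothing convention that the channel thereby opened connects exactly the into-region and the out-of-region of $c$, while the two sideswiped regions are left on opposite sides of the resulting arcs and are not fused by $c$. Hence the only fusions produced by the entire smoothing are the into/out-of fusions, each of which is an edge of $\Phi(D)$. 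Given $R_{1}$ and $R_{n}$ in a common space $A\subseteq\hat{D}_{s}$, I would take a path in $A$ from an interior point of $R_{1}$ to an interior point of $R_{n}$ avoiding the Seifert circles; each time this path passes between two original regions it does so through one of these opened channels, crossing from an into-region to its out-of-region (or vice versa), and reading off the regions traversed yields a chain witnessing $R_{1}\sim R_{n}$.

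The main obstacle is exactly this local step: showing that Seifert smoothing fuses precisely the into/out-of pair at each crossing and nothing more, so that ``being in the same space'' introduces no fusions beyond those already captured by $\sim$. This is where the checkerboard coloring does real work, since it guarantees the into/out-of pair is monochromatic and the sideswiped pair is the opposite color, which is what keeps the sideswiped regions apart after smoothing. Once that local picture is nailed down, the global path argument above and the definitional identification with paths in $\Phi(D)$ are routine.
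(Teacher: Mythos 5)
Your proposal is correct and follows essentially the same route as the paper: both factor the statement through the spatial-connection relation $\sim$, identify each edge of $\Phi(D)$ with a crossing that is out of one of the corresponding regions and into the other, and invoke the observation that such a crossing fuses those two regions under Seifert smoothing. The only difference is that you explicitly isolate and sketch an argument for the direction ``same space $\Rightarrow R_{1}\sim R_{n}$'' (via the local analysis showing that smoothing fuses exactly the into/out-of pair at each crossing and nothing more), whereas the paper simply asserts this step in the first line of its forward implication; that extra care is warranted, since it is the one part of the equivalence that is not definitional.
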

\begin{proof}
($\Rightarrow$) Let $R_{1}$ and $R_{n}$ be two regions that are part of the same space in the Seifert diagram. Then $R_{1}$ and $R_{2}$ are spatially connected which means there is a sequence of regions $R_{1},\dots,R_{n}$ such that for $1\leq i\leq n$ there is a crossing $c$ in $D$ that is crossing out of $R_{i}$ and into $R_{i+1}$ or otherwise into $R_{i}$ and out of $R_{i+1}$. In either case, the vertices corresponding to $R_{i}$ and $R_{i+1}$ are joined by an edge in $\Phi(D)$. We may then conclude that there will be a path in $\Phi(D)$ from the vertex corresponding to $R_{1}$ to that of $R_{n}$.\\
($\Leftarrow$) Let $v_{0}$ and $v_{n}$ be distinct vertices of $\Phi(D)$ such that there is a (not necessarily directed path) path $P=v_{0}, v_{1},v_{2},\dots,v_{n}$ between them. Then we have that there is a crossing $c$ in $D$ that is either crossing out of $R_{i}$ and into $R_{i+1}$ or vice versa for $0\leq i\leq n$ which gives us that $R_{0}$ and $R_{n}$ are spatially connected and are therefore in the same space of $\hat{D}_{s}$. This completes the proof.

\end{proof}
We can then view a component of $\Phi(D)$ as a characterization of the site markers in a particular space of a given $\hat{D}_{s}$.

\section{The Seifert Graph and the Labelled Seifert Graph}
%\Floatbarrier
 As shown in \cite{PC} we can construct the Seifert graph of a diagram from it's corresponding Seifert diagram. This is done by assigning a vertex to each Seifert circle and a signed edge labeled either $+$ or $-$ between two vertices for every site marker between their corresponding Seifert circles. The sign of these edges is determined by the marker it corresponds to. Edges with the same label correspond to markers of the same type. More precisely, a negative site marker corresponds to a negatively signed edge and positive site markers a positively signed edge. For clarity we will represent positive edges as solid and negative edges as hashed. We will say that two edges are of the same type if they are both solid or are both hashed. An example of a Seifert graph from the same diagram of $10_{138}$ that we have used in previous sections is displayed below.
 \begin{figure}[H]
 \centering
 \caption{}
 \includegraphics{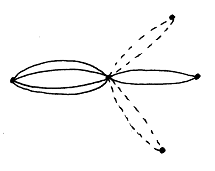}
 
 \end{figure}
 Here we define an extension of the Seifert graph that we call the labeled Seifert graph defined in the following way, 
 \begin{definition}
 The class of labeled Seifert graphs $\mathscr{LS}$ is the class containing the graphs $G$ that meet the following conditions:\\
 \begin{tabular}{ll}
 i. & The vertices of $G$ are labeled with nonnegative integers.\\
 ii. & The edges of $G$ are labeled $+$ or $-$.
 \end{tabular}
 \end{definition}
 When we consider a labeled Seifert graph constructed from a particular diagram $D$ we denote that graph as $G(D)$. $G(D)$ is constructed in the same way as the original Seifert graph with the only addition being that the vertices are labeled with the height of the Seifert circle that they correspond to. This is to say if $C$ is a Seifert circle with $h(C)=i$ then the corresponding vertex in $G(D)$ is labeled with an $i$. Displayed below is the same Seifert graph as before only now as a labeled Seifert graph:
 \begin{figure}[H]
  \centering
  \caption{}
  \includegraphics{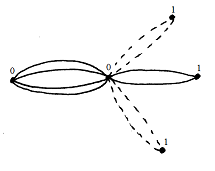}
  
  \end{figure}

\iffalse
\begin{figure}[H]\label{crossingsigns}
  \centering
  \caption{}
   % \includegraphics{figure1prime}
\end{figure}
\begin{figure}[H]\label{seifertgraph}
  \centering
  \caption{}
   % \includegraphics{figure1}
\end{figure}
\fi
Consider an arbitrary labeled Seifert graph $G$. Let $G_{i}$ be the subgraph of $G$ constructed in the following way. Delete all vertices $v$ such that $h(v)< i-1$ or $h(v)>i$. Then delete all edges between vertices of height $i-1$. This potentially disconnected subgraph is $G_{i}$. We will refer to all $G_{i}$'s as \emph{height subgraphs}. Observe that $G=\bigcup_{i=0}^{n}G_{i}$ where $n$ is the maximum height of a circle in the Seifert diagram. For clarity, when discussing a labeled Seifert graph constructed from a diagram $D$ we will denote the height subgraphs as $G_{i}(D)$. For example, in the figure below are the $G_{i}(D)$'s for the same projection of $10_{138}$ we have been using.

\begin{figure}[H]
  \centering
  \caption{$G_{0}(D)$ and $G_{1}(D)$}
    \includegraphics{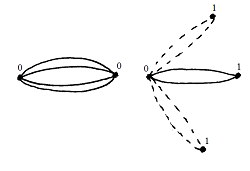}
\end{figure}

\section{Homogeneous Links}
%\Floatbarrier
Homogeneous Links were defined in \cite{PC} in the following way.

\begin{definition}
Given a diagram $D$ and Seifert graph $G$ we say that a block $B_{i}$ of $G$ is homogeneous if all edges in $B_{i}$ are of the same type.  We say that $D$ is homogeneous if all blocks in $G$ are homogeneous. We say that $L$ is homogeneous if it has a homogeneous diagram.
\end{definition}
Note the following result from \cite{PM}:
\begin{theorem}\label{block}
Let $D$ be a diagram, $F$ it's corresponding Seifert surface and $G$ the corresponding Seifert Graph. Then all the Seifert circles associated to a block of $G$ have the same height, except possibly one of them which contains all the other being its height one less.
\end{theorem}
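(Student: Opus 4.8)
The plan is to reduce this statement about blocks of $G$ to a statement about the \emph{spaces} of $\hat{D}_s$ by showing that all of the site markers — equivalently, all edges — belonging to a single block lie in one and the same space. Once this is established the conclusion is immediate: by the definition of the height of a space, a space $A$ of height $k$ is bounded either by one Seifert circle of height $k$ together with possibly several of height $k+1$, or, if $A$ is unbounded, only by circles of height $0$. Hence every circle meeting $A$ has height $k+1$ except for the single containing circle of height $k$, and that circle, having $A$ and therefore every circle inside $A$ in its interior, contains all of the others. So the entire content of the theorem is the claim that a block cannot spread across two distinct spaces.

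To prove that claim I would first record the elementary geometric fact that each Seifert circle $C$, being a simple closed curve disjoint from the others, borders exactly two spaces: the space immediately inside it, of height $h(C)$ and for which $C$ is the containing circle, and the space immediately outside it, of height $h(C)-1$ (the unbounded space when $h(C)=0$). Using this I would build an auxiliary \emph{space tree} $T$ whose vertices are the spaces of $\hat{D}_s$ and whose edges are the Seifert circles, where each circle $C$ is the edge joining its inside space to its outside space. Since $C$ is a Jordan curve, deleting the edge $C$ from $T$ separates the spaces inside $C$ from those outside it, so every edge of $T$ is a bridge; as $T$ is also connected, it is a tree.

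The heart of the argument is then to show that every cycle of $G$ uses edges lying in a single space. Given a cycle $C^{(1)},e_1,C^{(2)},e_2,\dots,C^{(r)},e_r,C^{(1)}$ of $G$, each edge $e_i$ lies in a space $S_i$ bordered by both $C^{(i)}$ and $C^{(i+1)}$. Because each circle borders only its inside and outside spaces, whenever $S_{i-1}\neq S_i$ these two spaces are exactly the pair joined in $T$ by the edge $C^{(i)}$. Reading the sequence $S_1,S_2,\dots,S_r,S_1$ as a closed walk in $T$, the transition at $C^{(i)}$ traverses the tree edge $C^{(i)}$; and since $C^{(i)}$ is incident in the cycle to only the two edges $e_{i-1},e_i$, each tree edge is traversed at most once. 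As a closed walk in a tree must cross every bridge an even number of times, no edge is traversed at all, that is, all the $S_i$ coincide. Finally, since a nontrivial block is $2$-connected, any two of its edges lie on a common cycle and hence in a common space, so the entire block lives in one space, the bridge blocks consisting of a single edge being an immediate special case.

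I expect the main obstacle to be making the closed-walk-in-$T$ argument fully rigorous, in particular verifying carefully that a change of space along the cycle can occur only by crossing the unique circle shared by consecutive markers, and that this crossing corresponds to traversing exactly one edge of $T$. Establishing that each circle borders precisely two spaces, and that $T$ is genuinely a tree, are the supporting lemmas on which this rests.
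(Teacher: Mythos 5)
The paper does not prove this statement at all: it is quoted verbatim from Manch\'on's paper \cite{PM} and simply cited, so there is no internal proof to compare against. Judged on its own, your argument is correct and self-contained. The reduction is the right one: once every edge of a block is known to lie in a single space $A$ of $\hat{D}_{s}$, the paper's own definition of the height of a space (one bounding circle of height $k$ containing $A$, all others of height $k+1$; only height-$0$ circles on the unbounded space) delivers the conclusion immediately. Your key ingredient --- the ``space tree'' $T$ whose vertices are spaces and whose edges are Seifert circles, each a bridge by the Jordan curve theorem, combined with the observation that a cycle of $G$ induces a closed walk in $T$ traversing each tree edge at most once and hence, by parity, not at all --- is a clean and complete way to show a cycle of $G$ cannot change spaces, and the passage from cycles to blocks via the fact that any two edges of a nonseparable graph with at least two edges lie on a common cycle is standard (and valid for the multigraphs that Seifert graphs can be, including two-vertex blocks of parallel edges, which your argument covers via $2$-cycles). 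The supporting facts you flag --- each circle borders exactly two spaces, and the adjacency structure of spaces is a tree --- are genuine but routine consequences of the circles being disjoint simple closed curves, and the height bookkeeping for the two spaces adjacent to a circle ($h(C)$ inside, $h(C)-1$ outside) checks out against the paper's definitions. In short: the paper buys the result by citation; your proof actually establishes it, at the cost of the two elementary topological lemmas you identify.
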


Given that our labelled Seifert graph is an extension of the Seifert graph this result will hold for labelled Seifert graphs as well. From this we may conclude the following:
\begin{corollary}
Let $B$ be a block of some labelled Seifert graph $G(D)$, then there is some $i$ such that $B\subset G_{i}(D)$.
\end{corollary}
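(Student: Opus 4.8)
The plan is to derive this directly from Theorem~\ref{block}, whose conclusion carries over verbatim to the labelled setting (as the paper has already observed) since the labelled Seifert graph differs from the ordinary Seifert graph only by recording the height on each vertex. First I would apply Theorem~\ref{block} to the block $B$: it supplies a single height value $k$ such that every vertex of $B$ has height $k$, with the sole possible exception of one vertex of height $k-1$ (the one whose Seifert circle properly contains all the others in the block). Everything then reduces to choosing the right index $i$ and checking the two deletion rules that define $G_i(D)$ against the vertices and edges of $B$.

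The index I would take is $i=k$, and I would verify the two rules in turn. For the vertex-deletion rule, $G_k(D)$ retains precisely the vertices of height $k-1$ and $k$; since by Theorem~\ref{block} every vertex of $B$ has height $k$ or $k-1$, none of them is deleted. For the edge-deletion rule, $G_k(D)$ discards only edges joining two vertices of height $k-1$; but $B$ has at most one vertex of height $k-1$, so no edge of $B$ can have both of its endpoints at height $k-1$, and hence no edge of $B$ is deleted. Combining the two checks yields $B\subset G_k(D)$, which is the statement with $i=k$.

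The main point requiring care, and the only real obstacle, is the edge-deletion rule, together with the fact that $i=k$ is forced and no neighbouring index works: if $B$ had all of its vertices at the common height $k$, then taking $i=k+1$ would place those vertices at height $i-1$ and the edge rule would erase every edge of $B$. The one subtlety to dispatch inside the edge check is the degenerate possibility of a loop at the single height-$(k-1)$ vertex, which is the only configuration that could produce an edge between two height-$(k-1)$ vertices when there is just one such vertex; I would rule this out by noting that a loop forms a block by itself and so cannot sit inside a larger block, while if $B$ \emph{is} that lone loop then its single vertex is the common-height vertex and the loop, joining two height-$k$ vertices, survives. With the index pinned down and both rules confirmed, the corollary follows with no further computation.
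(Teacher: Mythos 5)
Your proposal is correct and follows the same route as the paper, which simply deduces the corollary from Theorem~4 (Manch\'on's block-height result) after noting that it carries over to the labelled Seifert graph. You spell out the verification of the vertex- and edge-deletion rules for $G_k(D)$ that the paper leaves implicit (the loop case is vacuous anyway, since a crossing always joins two distinct Seifert circles, so Seifert graphs are loopless), but the underlying argument is identical.
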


\section{Alternative Links}

Recall that in the construction of the Seifert diagram crossings are replaced with positive or negative site markers. Two markers are said to be of the \emph{same type} if they are both positive or are both negative. A space in the Seifert diagram may contain one, two or zero marker types. We then have the following definition from \cite{LK}:
\begin{definition}
 A diagram $D$ is alternative if and only if each space in it's corresponding Seifert diagram $D_{s}$ contains at most one marker type. $L$ is said to be alternative if it has an alternative diagram.
 \end{definition}

We can recharacterize this definition in the context of the spatial graph of a diagram, but first we will need the following definition.
\begin{definition}
Given a spatial graph $\Phi(D)$ we will say that $\Phi(D)$ is alternative if and only if it does not contain a walk with both positive and negative edges.
\end{definition}

\begin{theorem}
Let $D$ be a diagram, then $D$ is alternative if and only if $\Phi(D)$ is alternative.
\end{theorem}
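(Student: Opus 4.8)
The plan is to pass through the dictionary between $\Phi(D)$ and the Seifert diagram that the earlier results have set up, and then reduce the statement to a short graph-theoretic equivalence. Recall that each crossing of $D$ is simultaneously replaced by a single site marker and contributes a single signed edge to $\Phi(D)$, namely the edge joining the region the crossing is crossing out of to the region it is crossing into; by construction the sign of this edge records the type of the corresponding marker. First I would invoke the theorem identifying the spaces of $\hat{D}_{s}$ with the (not necessarily directed) path-components of $\Phi(D)$: two regions lie in a common space exactly when their vertices are joined by a path. Combining this with the edge/marker correspondence, the edges lying in a fixed connected component of $\Phi(D)$ are precisely the markers lying in one space of $\hat{D}_{s}$, and this bijection preserves sign/type. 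Consequently the condition ``$D$ is alternative,'' i.e.\ every space contains at most one marker type, becomes the purely combinatorial statement that every connected component of $\Phi(D)$ carries edges of at most one sign.

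With this reformulation in hand, the theorem reduces to the equivalence: every connected component of $\Phi(D)$ uses edges of only one sign if and only if $\Phi(D)$ contains no walk using both a positive and a negative edge. For the first direction I would observe that any walk is connected and therefore lies inside a single connected component; hence if every component is single-signed, no walk can mix signs. For the converse I would argue contrapositively: if some component $K$ contains both a positive edge $e_{+}$ and a negative edge $e_{-}$, then since $K$ is connected there is a path joining an endpoint of $e_{+}$ to an endpoint of $e_{-}$, and concatenating $e_{+}$, this path, and $e_{-}$ yields a walk containing edges of both signs, so $\Phi(D)$ fails to be alternative. Chaining the two reformulations then gives $D$ alternative $\iff$ $\Phi(D)$ alternative.

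The one point I expect to need genuine care is the edge-level version of the space correspondence. The earlier theorem is phrased in terms of the \emph{vertices} (regions) of a component, whereas here I need it at the level of \emph{edges} (markers). I would make this precise by checking that the marker replacing a crossing $c$ lies in the same space as the two regions $c$ is crossing into and out of, which are exactly the endpoints of the edge that $c$ contributes, and that every marker of a given space arises in this way; this makes the edges of a component biject with the markers of the associated space with matching signs. Once that bookkeeping is nailed down, the remainder is a self-contained exercise about connectivity and walks, so I anticipate the edge--marker dictionary, rather than the combinatorics of walks, to be the only delicate step.
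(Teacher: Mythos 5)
Your proposal is correct and follows essentially the same route as the paper's own proof: both pass through the correspondence between connected components of $\Phi(D)$ and spaces of the Seifert diagram, translate ``at most one marker type per space'' into ``at most one edge sign per component,'' and then identify that condition with the absence of a mixed-sign walk. Your version is somewhat more careful than the paper's, since you explicitly justify the component/walk equivalence and flag the upgrade from the vertex-level space correspondence to the edge-level marker correspondence, both of which the paper treats as immediate.
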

\begin{proof}
($\Rightarrow$) Let $D$ be an alternative diagram. Consider $\Phi(D)$. Each crossing of $D$ corresponds to an edge in $\Phi(D)$. Recall that two edges in a connected component of $\Phi(D)$ correspond to markers in the same space of the Seifert diagram of $D$. Because $D$ is alternative we know that each space contains only one marker type we then have that each connected component of $\Phi(D)$ has only one edge type which is to say there is no walk in $\Phi(D)$ which contains both negative and positive edges.\\
($\Leftarrow$) Let $\Phi(D)$ be the spatial graph of the oriented link diagram $D$. Assume also that $\Phi(D)$ is alternative. Then there is no walk in $\Phi(D)$ which contains both negative and positive crossings. As we have noted, the edges of a connected component of $\Phi(D)$ corresponds to the edges in a space of the Seifert diagram of $D$. Because there is no walk containing both edge types in $\Phi(D)$ we  then know that there is no space in the Seifert diagram of $D$ that has more than one edge type which is to say that $D$ is alternative. This completes the proof.
\end{proof}

We can also recharacterize the definition of an alternative diagram in terms of the labeled Seifert graph.
\begin{definition} Given a labeled Seifert graph $G$ we will say that a subgraph $G_{i}$ is alternative if and only if each connected component of $G_{i}$ has at most one edge type. The graph $G$ is alternative if and only if each $G_{i}$ is alternative.
\end{definition}
We may then present our main theorem, a recharacterization of an alternative diagram in the context of labeled Seifert graphs.

\begin{theorem}
Let $D$ be a diagram and $G(D)$ the labeled Seifert graph of $D$, then $D$ is alternative if and only if $G(D)$ is alternative.
\end{theorem}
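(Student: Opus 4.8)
The plan is to set up a type-preserving correspondence between the spaces of the Seifert diagram $D_{s}$ and the connected components of the height subgraphs $G_{i}(D)$, and then read off the two ``at most one marker type'' conditions from one another. The bookkeeping I would use is the following. A bounded space of height $k$ is the region lying directly inside a unique Seifert circle $C'$ with $h(C')=k$, and is bounded in addition by the circles $D_{1},\dots,D_{m}$ of height $k+1$ sitting directly inside $C'$ (the unbounded space of height $-1$ plays the role of $C'$ for the height-$0$ circles). Every site marker in this space joins two of the circles $C',D_{1},\dots,D_{m}$, so the corresponding edge of $G(D)$ joins a vertex of height $k$ to one of height $k+1$, or joins two vertices of height $k+1$; by the construction of the height subgraphs these are exactly the edges appearing in $G_{k+1}(D)$. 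Thus each marker of a height-$k$ space becomes an edge of $G_{k+1}(D)$, with positive markers giving positive edges and negative markers negative edges.

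For the forward direction I would first show that any two edges of a fixed $G_{i}(D)$ that share a vertex correspond to markers lying in one and the same space. If the shared vertex has height $i-1$, both edges run to circles directly inside it, so both markers lie in its inner space; if the shared vertex $C$ has height $i$, then every edge of $G_{i}(D)$ incident to $C$ (whether it climbs to the height-$(i-1)$ circle containing $C$ or runs to a sibling of the same height) is a marker in the single space lying directly outside $C$. In either case the two markers share a space, so propagating along a connected component shows that an entire connected component of $G_{i}(D)$ has all of its markers in one space. If $D$ is alternative that space carries at most one marker type, hence the component carries at most one edge type, and $G(D)$ is alternative.

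For the reverse direction it suffices to prove the contrapositive, so I would assume some space $S$ of height $k$ contains markers of both types and produce a single connected component of $G_{k+1}(D)$ carrying both edge types. The crux, and what I expect to be the main obstacle, is showing that all the markers of $S$ already lie in one connected component of $G_{k+1}(D)$; equivalently, that the bounding circles $C',D_{1},\dots,D_{m}$ of $S$ are joined to one another by the markers of $S$. This is where connectedness of the diagram is used: if these circles split into two or more clusters under the markers of $S$, then any cluster omitting $C'$, together with everything those circles contain, would meet the rest of the diagram only through markers of $S$ internal to the cluster, so every crossing incident to that region would be internal to it and $D$ would be disconnected. Hence for a connected diagram the markers of $S$ form a single component of $G_{k+1}(D)$; since $S$ carries both types so does that component, and $G(D)$ is not alternative. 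Combining the two directions gives the theorem, with the general case following by applying the argument to each connected piece separately.
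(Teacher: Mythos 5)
Your proposal is correct and follows the same overall strategy as the paper: both proofs rest on the correspondence between the markers of a height-$k$ space and the edges of a single connected component of the height subgraph $G_{k+1}(D)$, and then transfer the ``at most one type'' condition back and forth. Your forward direction (two edges of $G_{i}(D)$ sharing a vertex come from markers in a common space, then propagate along the component) is essentially the paper's three-case analysis around the unique low-height vertex $v_{0}$, just organized more cleanly. Where you genuinely add something is in the reverse direction: since alternativity of $G_{i}(D)$ is defined \emph{per connected component}, one must know that all markers of a given space land in one component of $G_{k+1}(D)$, and your cluster argument --- if the bounding circles $C', D_{1},\dots,D_{m}$ split into pieces under the markers of $S$, a piece omitting $C'$ together with everything it contains would be joined to the rest of the diagram by no crossing at all, contradicting connectedness --- supplies exactly the step the paper's proof passes over in silence. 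This is the right point to be careful about, and your handling of it (including the reduction to connected diagrams) is sound; the only cosmetic discrepancy is an off-by-one in labels, since the paper's reverse direction refers to $G_{k}(D)$ where your (correct) bookkeeping places these edges in $G_{k+1}(D)$.
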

\begin{proof}
($\Rightarrow$). Let $D$ be an alternative diagram. Consider $G(D)$. Assume for the sake of contradiction that $G(D)$ is not alternative. Then there is some $i$ such that $G_{i}(D)$ is not alternative. This is to say there is some connected component in $G_{i}(D)$ that has two edges of different types. Let this component be denoted $H$. Let $v_{0}$ be the single vertex in $H$ that is at height $i$ and let all other vertices be denoted by $w_{1},\dots ,w_{n}$. There are then three cases to check. In the first case there are two edges incident to $v_{0}$ that are of different types. In this case it is easy to see that these edges correspond to crossings of different types in the space corresponding to $v_{0}$ which would give that $D$ is not alternative. In the second case two edges between $w_{j}$ , $w_{k}$, and $w_{l}$ are of different types. All edges between these $w_{i}$'s correspond to crossings in the space associated to $v_{0}$. If any two of these are of different types then we have that $D$ is not alternative.  In the final case there is an edge incident to $v_{0}$ that is a different type than an edge between two $w_{i}$'s. We know that edges incident to $v_{0}$ in $G_{i}(D)$ correspond to crossings in the space associated to $v_{0}$ and similarly, edges amongst the $w_{i}$'s correspond to crossing in the space corresponding to $v_{0}$. We then have that if there are differing types between these edges then $D$ must not be alternative. This is our final contradiction. We then must have that all $G_{i}(D)$ are alternative thereby making $G(D)$ alternative by definition.\\
($\Leftarrow$) Let $G(D)$ be an alternative labeled Seifert graph for some diagram $D$. We wish to show that there is no space in the Seifert diagram of $D$ that has more than one type of site marker. Assume for the sake of contradiction that there is such a space. If this space is the  larger outside space then this would imply that there is a Seifert circle of height 0 that is connected to other distinct circles of height 0 by different site markers. This would imply that there are edges of differing types in the subgraph $G_{0}(D)$ which is a contradiction. Then consider a space between circles of height $k$ and height $k+1$. If this space has differing site markers then the circle of height $k$ must contain circles of height $k+1$. If these circles have crossings of different types amongst them then these crossings will show as different types of edges in $G_{k}(D)$. Similarly for the other 2 possible cases. These would all contradict $G(D)$ being alternative, so it must be that $D$ is alternative.
\end{proof}
It was noted in \cite{PC} that all alternative link diagrams are homogeneous. This is easy to see because if a labeled Seifert graph is alternative then all of its $G_{i}(D)$'s are alternative which would imply that the blocks contained in each $G_{i}(D)$ are homogeneous. However, not all homogeneous diagrams are alternative. Below are two diagrams of a $9_{43}$ with their respective labeled Seifert graphs. The second diagram is alternative and homogeneous while the first diagram is homogeneous, but not alternative.
 \begin{figure}[H]
   
   \caption{A homogeneous diagram that is not alternative}
   
    \includegraphics{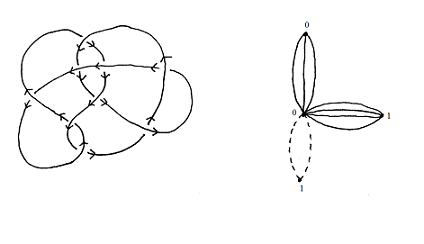}
 \end{figure}
 
 \begin{figure}[H]
 \caption{A diagram that is both homogeneous and alternative}
 
 \includegraphics{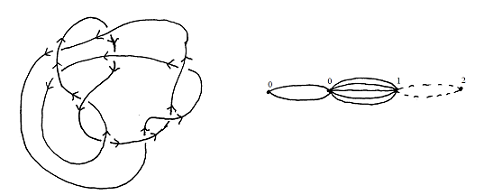}
 \end{figure}
Similarly, it is not the case that all minimal crossing diagrams of a homogeneous link are homogeneous as can be seen in the infamous ``Perko Pair" which answers question 1 from \cite{PC}. This example also shows that minimal crossing diagrams of alternative links are not necessarily alternative. Note that $10_{161}$ might not the smallest homogeneous link that has a nonhomogeneous minimal crossing diagram. All minimal crossing diagrams of alternating links are alternating and therefore homogeneous and alternating. It must then be that such a minimal example is nonalternating. The smallest nonalternating homogeneous link is $8_{19}$, \cite{PC}. The sequence of Reidemeister moves between the two diagrams of $10_{161}$ with the corresponding spatial and labeled Seifert graphs are included in the appendix.
\begin{figure}[H]
   
   \caption{A minimal crossing homogeneous diagram and a minimal crossing nonhomogeneous diagram of $10_{161}$}
   
     \includegraphics{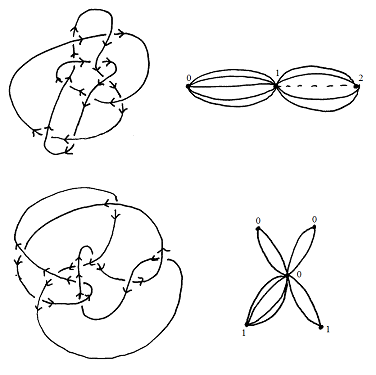}
 \end{figure}

\section{Questions}
In \cite{PC} the question was asked as to whether or not every homogeneous link has a homogeneous minimal crossing diagram. It is then natural to ask the same of alternative links.
\begin{question}
 Does every alternative link have an alternative minimal crossing diagram?
\end{question}
\begin{question} 
Given a labeled Seifert graph $G(D)$ of a diagram $D$ is there a sequence of Reidemeister moves that can be done on $D$ to get a diagram $D^{\prime}$ where the labeled Seifert graph, $G(D^{\prime})$ has different heights, but is isomorphic to $G(D)$ otherwise? 
\end{question}
\begin{figure}[H]
   
   \caption{$G(D)$ and $G(D^{\prime})$}
   
     \includegraphics{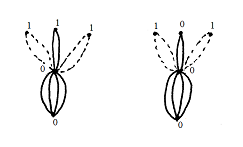}
 \end{figure}
\begin{question}
Is there a homogeneous link that is not alternative?
\end{question}
There is a complete classification of homogeneous knots up to 9 crossings in \cite{PC}. All homogeneous links up to $10_{134}$ can be seen to be alternative from their minimal crossing diagrams given on \cite{JC}. The first homogeneous knot listed that is not immediately identifiable as alternative is $10_{138}$.
\begin{figure}[H]
   
   \caption{Does $10_{138}$ admit an alternative diagram?}
   
     \includegraphics{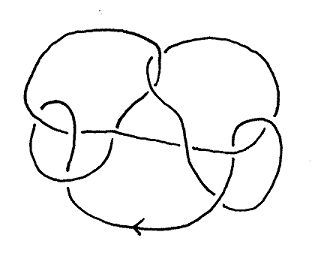}
 \end{figure}

\section{Appendix}
Presented here is the Reidemeister sequence between the two diagrams of the Perko Pair. At each step the knot diagram is on the left and the corresponding spatial and labeled Seifert graph are on the center and right respectively.\\

\includegraphics{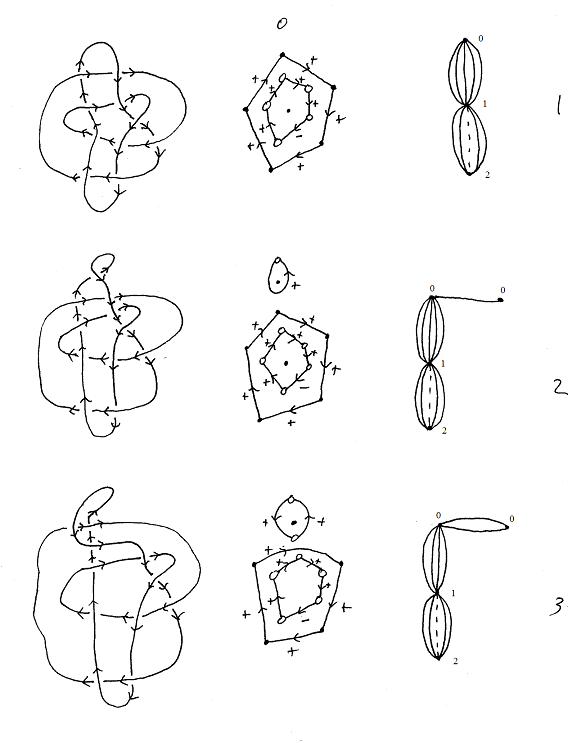}
\newpage
\includegraphics{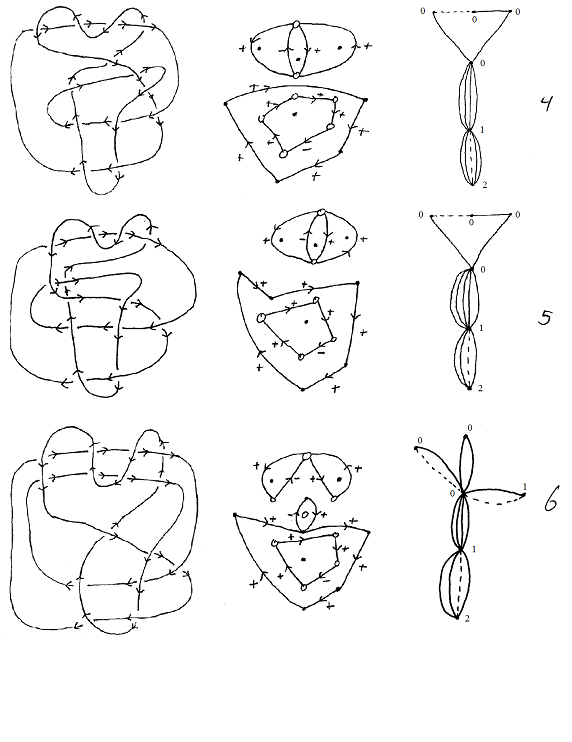}
\newpage
\includegraphics{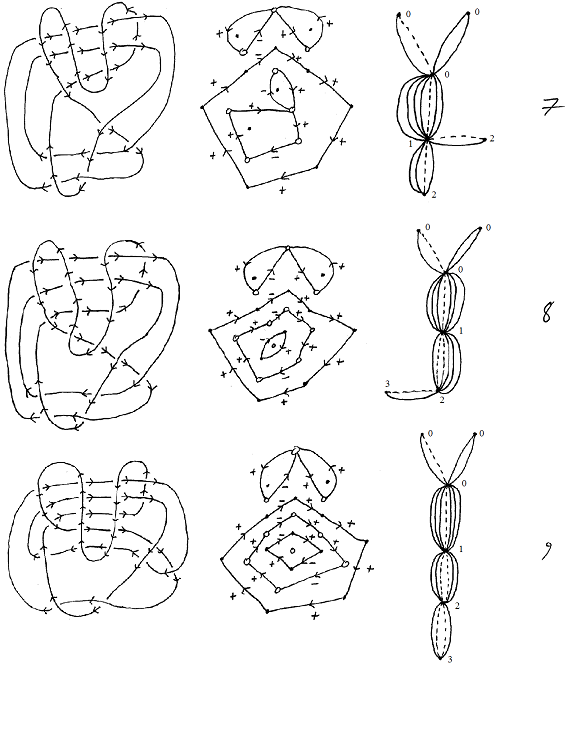}
\newpage
\includegraphics{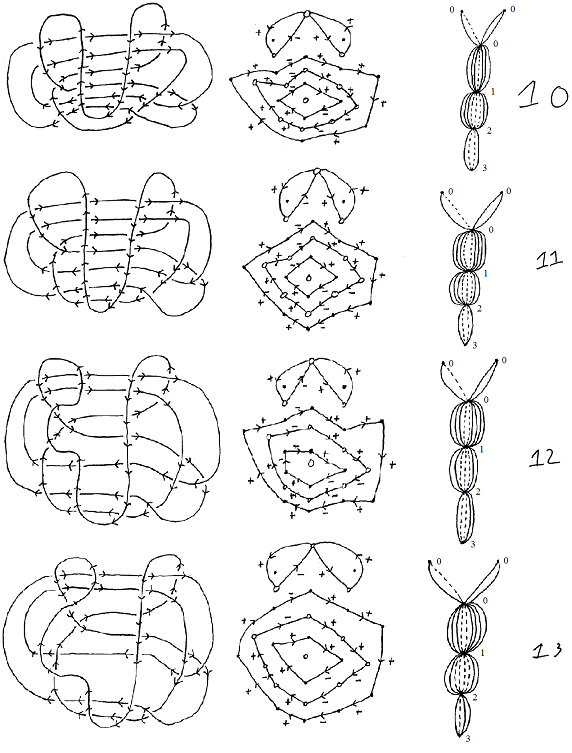}
\newpage
\includegraphics{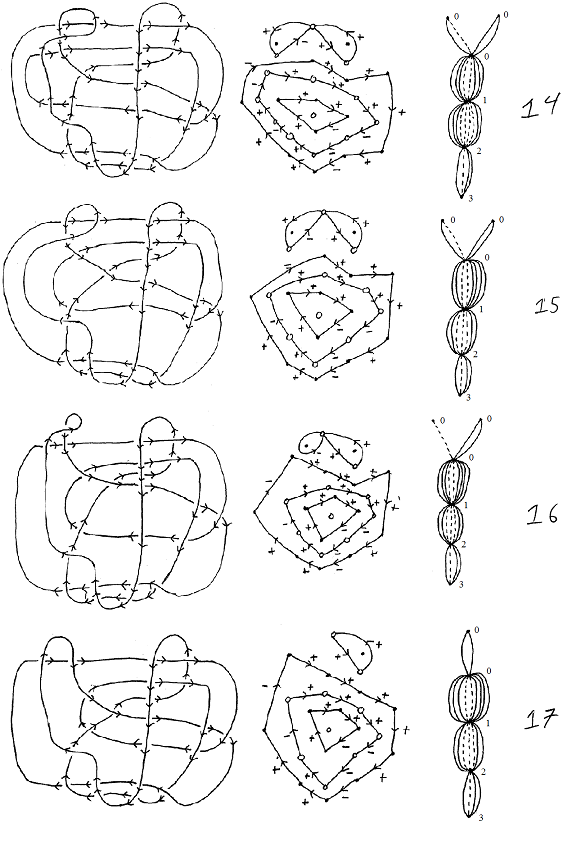}
\newpage
\includegraphics{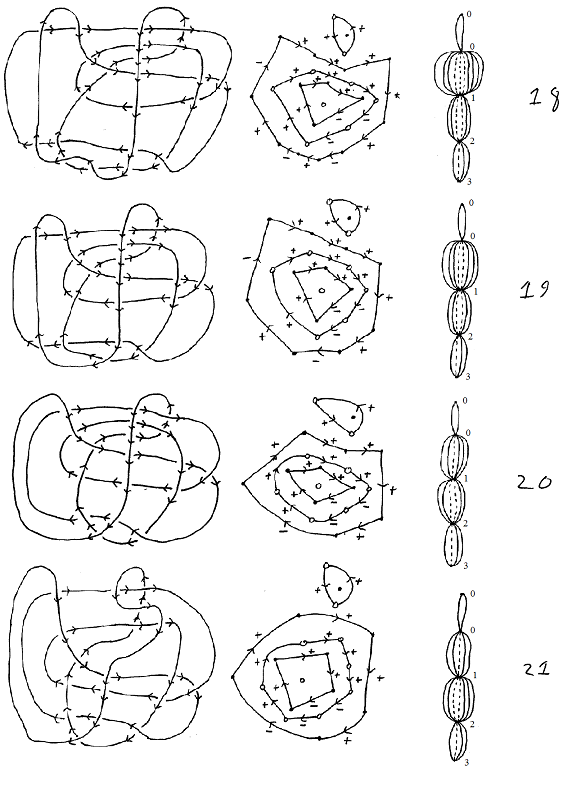}
\newpage
\includegraphics{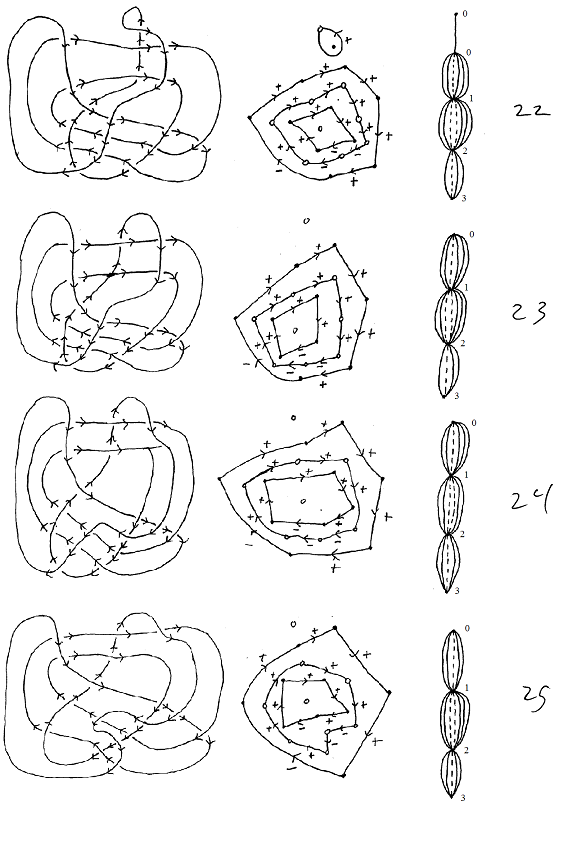}
\newpage
\includegraphics{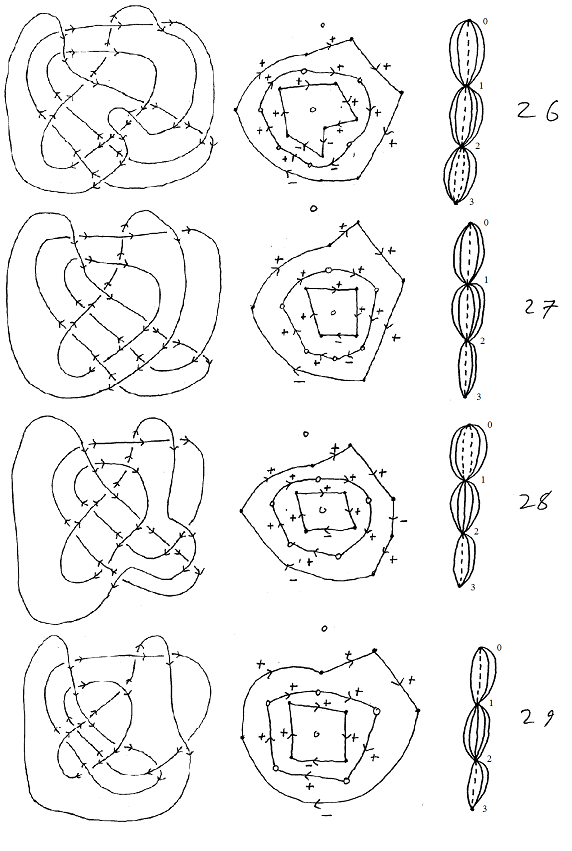}
\newpage
\includegraphics{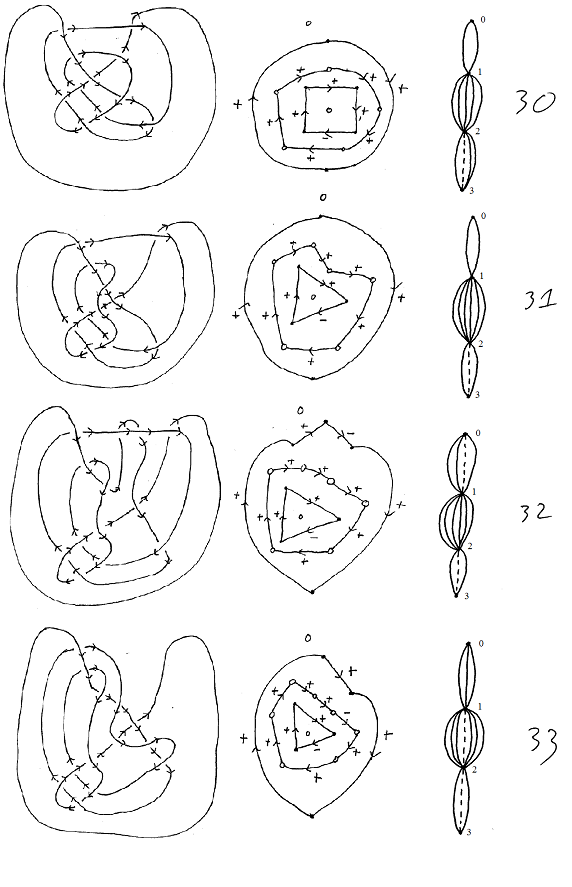}
\newpage
\includegraphics{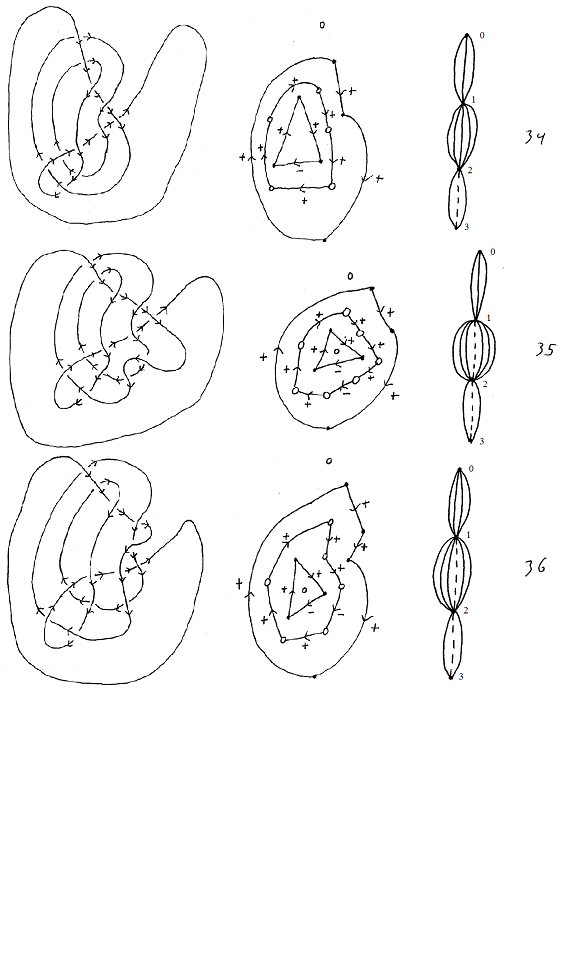}
\newpage
\includegraphics{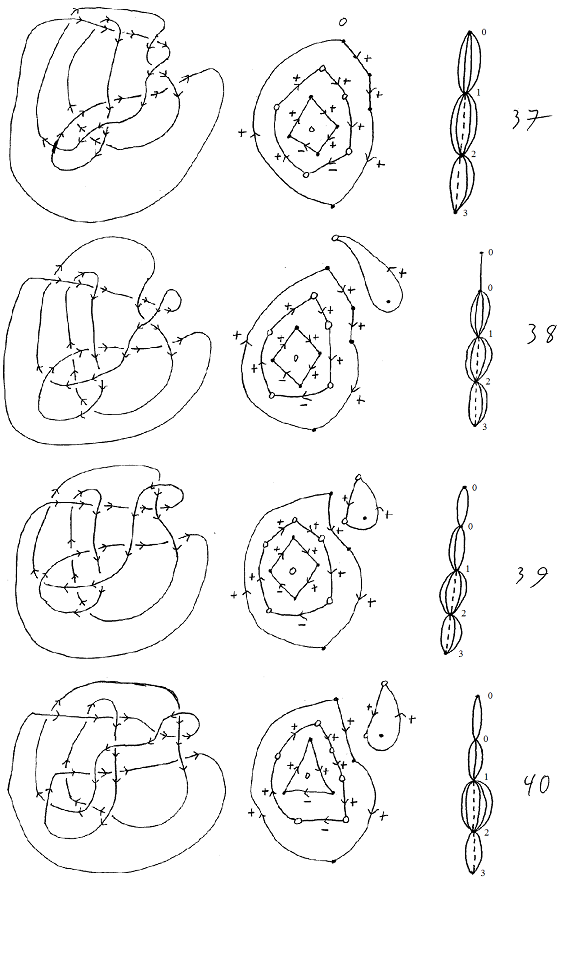}
\newpage
\includegraphics{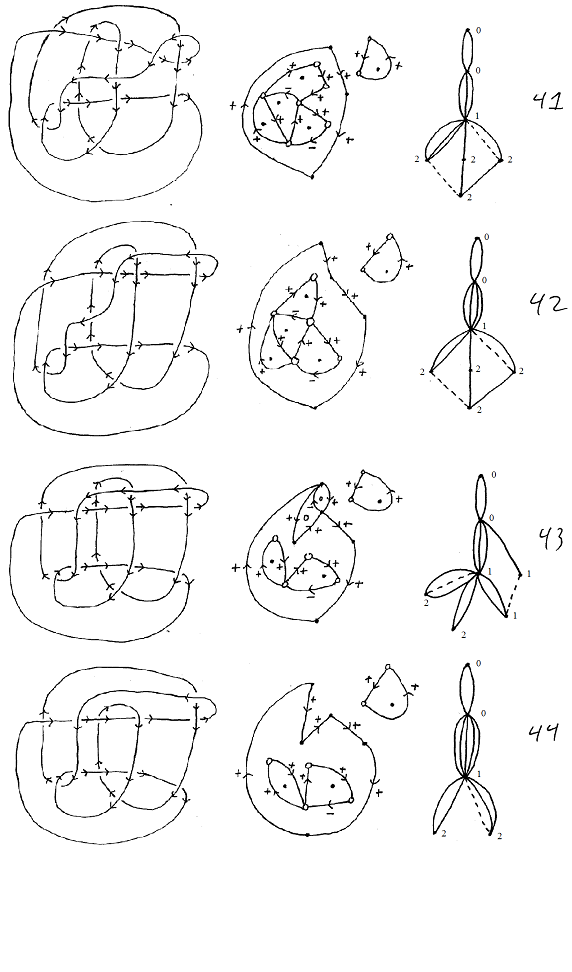}
\newpage
\includegraphics{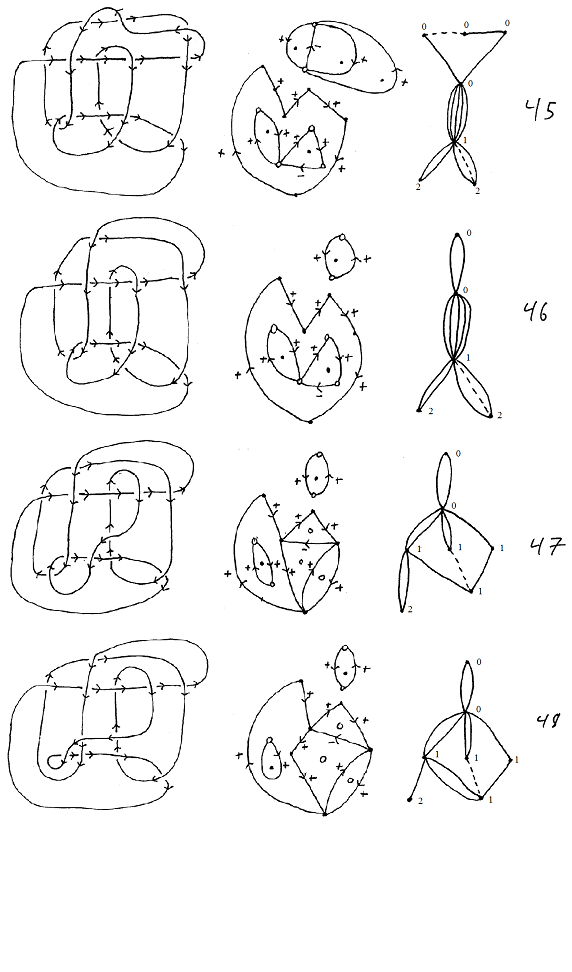}
\newpage
\includegraphics{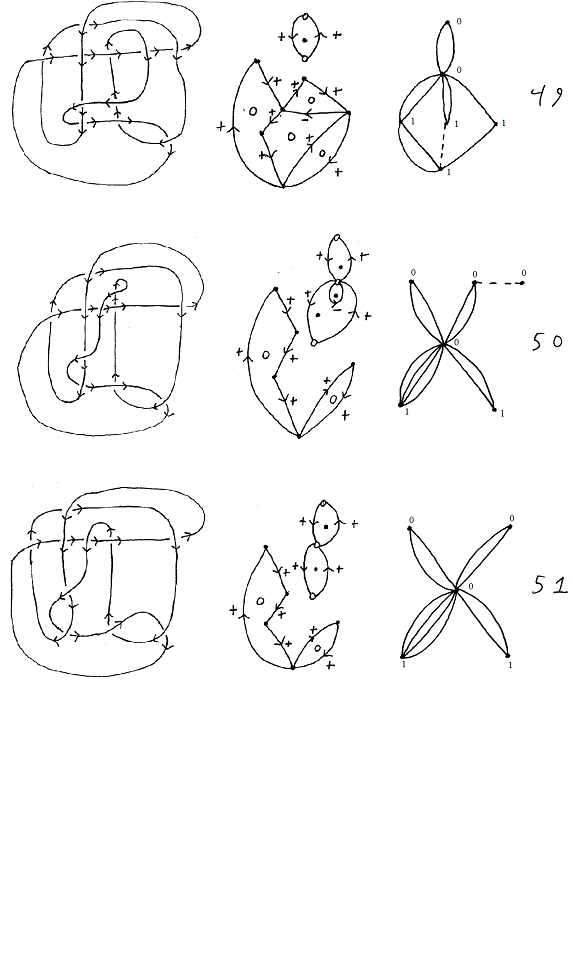}
\end{document}